\documentclass[12pt]{amsart}

\usepackage{amsmath, amscd, latexsym, hyperref, times, color}
\usepackage{subcaption, xcolor, graphicx,import}

\textwidth 6.5in \textheight 8.5in \evensidemargin 0in
\oddsidemargin 0in \theoremstyle{plain} \topmargin 0in

\textwidth 6in \textheight 7.6in \evensidemargin .25in
\oddsidemargin .25in \theoremstyle{plain}

\newtheorem{Thm}{Theorem}
\newtheorem{Lem}[Thm]{Lemma}
\newtheorem{Conj}[Thm]{Conjecture}

\newtheorem{Prop}[Thm]{Proposition}

\theoremstyle{definition}
\newtheorem{Rem}[Thm]{Remark}

\newcommand{\obg}{\mathrm{obg}}
\newcommand{\g}{\mathrm{g}}
\newcommand{\BZ}{\mathbb{Z}}
\newcommand{\id}{\mathrm{id}}
\newcommand{\inv}{^{-1}}

\def\v{\vskip.12in}

\captionsetup[subfigure]{labelfont=rm}

\begin{document}

\title{Non-prime 3-Manifolds with Open Book Genus Two}

\author{Mustafa Cengiz}

\address{Department of Mathematics \\ Boston College \\ Chestnut Hill, Massachusetts, USA}

\email{mustafa.cengiz@bc.edu}

\begin{abstract}
An open book decomposition of a 3-manifold $M$ induces a Heegaard splitting for $M$, and the minimal genus among all Heegaard splittings induced by open book decompositions is called the \emph{open book genus} of $M$. It is conjectured by Ozbagci \cite{O} that the open book genus is additive under the connected sum of 3-manifolds.  In this paper, we prove that a non-prime 3-manifold which has open book genus 2 is homeomorphic to $L(p,1)\#L(q,1)$ for some integers $p,q\neq\pm1$, that is, it has non-trivial prime pieces of open book genus 1. In particular, there cannot be a counter-example to additivity of the open book genus such that the connected sum has open book genus 2.
\end{abstract}

\thanks{}

\v \v \v

\maketitle

\setcounter{section}{0}


\section*{Introduction}
Throughout this paper, any 3-manifold is closed, connected and orientable. A pair $(B,\pi)$ is called an \emph{(embedded) open book decomposition} of a 3-manifold $M$ if $B$ is a link in $M$ and $\pi:(M\setminus B)\to S^1=[0,1]/\sim$ is a fibration with fibers realizing $B$ as boundary. The link $B$ is called the \emph{binding} of the open book, and the closure of each fiber of $\pi$ in $M$ is called a \emph{page} of the open book. Note that the pages have the homeomorphism type of a compact surface $\Sigma$. If $(B,\pi)$ is an open book decomposition of $M$, then $H_1=(\pi\inv([0,1/2])\cup B)$ and $H_2=(\pi\inv([1/2,1])\cup B)$ are handlebodies homeomorphic to $\Sigma\times I$ embedded in $M$. Moreover, $(H_1,H_2)$ defines a Heegaard splitting of genus $g=g(\Sigma\times I)=1-\chi(\Sigma)$ for $M$. The \emph{open book genus} of a $3$-manifold $M$, denoted by $\obg(M)$, is the minimum genus over all Heegaard splittings of $M$ induced by open book decompositions. It immediately follows from the definition that $\g(M)\le \obg(M)$ for any 3-manifold $M$, where $g(M)$ is the Heegaard genus of $M$. A Heegaard splitting is not necessarily induced by an open book decomposition. Furthermore, $\obg(M)$  does not necessarily equal $g(M)$. For example, ``most'' 3-manifolds of Heegaard genus 2 have greater open book genus \cite{R}. We also have the following classification of $3$-manifolds with open book genus 0 and 1 (see \cite{O}).
\begin{Prop}
Let $M$ be a 3-manifold. Then $\obg(M)=0$ if and only if $M\cong S^3$, and $\obg(M)=1$ if and only if $M\cong L(p,1)$ for some integer $p\neq \pm1$.
\end{Prop}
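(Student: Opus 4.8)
The plan is to exploit the rigid relationship $g=1-\chi(\Sigma)$ between the genus of the induced Heegaard splitting and the Euler characteristic of the page. A small open book genus forces the page $\Sigma$ to have large Euler characteristic, so I can first enumerate the possible pages, then the possible monodromies, and finally identify the resulting manifolds. Two facts are used repeatedly: that $\g(M)\le\obg(M)$, and that the page of an open book in an oriented manifold is itself orientable (it is a fiber of a fibration over $S^1$, hence coorientable, and the ambient orientation then orients it). Thus $\Sigma$ is always a compact orientable surface with nonempty boundary, the binding.

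For $\obg(M)=0$: here $\g(M)\le 0$ forces $\g(M)=0$, so $M\cong S^3$. Conversely $S^3$ carries the trivial open book whose page is a disk, the monodromy being forced to be the identity since the mapping class group of the disk rel boundary is trivial; this induces the genus-$0$ splitting, so $\obg(S^3)=0$. This is consistent with the Euler characteristic count: $\obg(M)=0$ means $\chi(\Sigma)=1$, and writing $\chi=2-2g-b$ with $b\ge 1$ shows the disk is the only possibility.

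For $\obg(M)=1$: now $\chi(\Sigma)=0$, and $2-2g-b=0$ with $b\ge 1$ forces $g=0$, $b=2$, so $\Sigma$ is an annulus $A$. The mapping class group of $A$ rel boundary is infinite cyclic, generated by the Dehn twist $\tau$ along the core, so the monodromy is $\tau^n$ for some $n\in\BZ$. The crux is then to show that the open book $(A,\tau^n)$ produces $L(n,1)$, with the conventions $L(\pm1,1)\cong S^3$ and $L(0,1)\cong S^1\times S^2$. Granting this, the forward direction follows: $M\cong L(n,1)$, and since $\obg(M)\ne 0$ we have $M\not\cong S^3$, forcing $n\ne\pm1$. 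The converse is the same construction run backwards: if $M\cong L(p,1)$ with $p\ne\pm1$, the annulus open book with monodromy $\tau^p$ gives $\obg(M)\le 1$, while $p\ne\pm1$ means $M\not\cong S^3$, so $\obg(M)\ge 1$.

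The main obstacle is identifying $(A,\tau^n)$ with $L(n,1)$, and I would do this directly from the induced genus-$1$ splitting. Each handlebody $H_i\cong \Sigma\times I=A\times I$ is a solid torus, and the Heegaard surface is the torus obtained by doubling $A$ along the binding, $P_0\cup P_{1/2}$. Choosing a spanning arc $a\subset A$ joining the two boundary circles, the compressing disk $a\times[0,1/2]$ exhibits the meridian of $H_1$ as $a_0\cup a_{1/2}$, while the meridian of $H_2$, computed across the monodromy gluing $(x,1)\sim(\tau^n(x),0)$, is carried by a parallel arc $a'$ to $a'_{1/2}\cup (\tau^n(a'))_0$. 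Placed in minimal position, their only intersections lie in the page $P_0$ and are governed by $a\cap\tau^n(a')$, which consists of exactly $|n|$ points; hence the two meridians meet $|n|$ times on the Heegaard torus and $M\cong L(n,q)$. The remaining delicate point is pinning down $q$: tracking the gluing carefully, or simply comparing against the unavoidable boundary cases (the Hopf-band monodromies $n=\pm1$ must give $S^3$, and the identity case $n=0$ must give $S^1\times S^2$), forces $q\equiv 1$, which completes the identification and hence the proof.
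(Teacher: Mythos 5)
The paper itself offers no proof of this Proposition (it is quoted from \cite{O}), so there is no in-paper argument to compare against; your proposal has to stand on its own. Its overall structure is the natural one and most of it is correct: orientability of pages plus $\chi(\Sigma)=1-\obg$ correctly reduces genus $0$ to the disk (where Alexander's trick kills the monodromy, giving $S^3$) and genus $1$ to the annulus with monodromy $\tau^n$; the deduction of both equivalences from the single identification $(A,\tau^n)\cong L(n,1)$, including the exclusion $n\neq\pm1$ and the case $L(0,1)\cong S^1\times S^2$, is also handled correctly; and your intersection-number computation of the two meridians on the Heegaard torus does establish $M\cong L(n,q)$ for \emph{some} $q$.

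The genuine gap is the determination of $q$, and the shortcut you offer does not close it. Since $L(n,q)\cong L(n,1)$ only when $q\equiv\pm1 \pmod n$, pinning down $q$ is precisely the content of the statement: for instance $L(5,2)$ has meridians meeting in $5$ points just like $L(5,1)$, yet the two are not homeomorphic. Knowing that $(A,\tau^{\pm1})\cong S^3$ and $(A,\mathrm{id})\cong S^1\times S^2$ places no constraint at all on which $L(5,q)$ the open book $(A,\tau^5)$ produces, because the homeomorphism type for each $n$ is determined by that gluing alone; so ``comparing against the boundary cases'' is not a proof. What does work --- and is the missing idea behind ``tracking the gluing carefully'' --- is a structural relation between the gluings for different $n$: the Heegaard map $f_n$ for monodromy $\tau^n$ equals $f_0\circ \widehat{T}^{\,n}$, where $\widehat{T}$ is the Dehn twist of the torus $\partial H_1$ along the core circle of the page $A_1$. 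That core circle is a \emph{longitude} $\lambda_1$ of the solid torus $H_1$, so on homology $\widehat{T}^{\,-n}$ fixes $\lambda_1$ and sends the meridian $\mu_1$ to $\mu_1\pm n\lambda_1$. Combining this with your $n=0$ computation, which says $f_0^{-1}(\mu_2)=\mu_1$, gives $f_n^{-1}(\mu_2)=\widehat{T}^{\,-n}(\mu_1)=\mu_1\pm n\lambda_1$, i.e., the meridian of $H_2$ is glued along a $(\pm n,1)$-curve, so $M\cong L(\pm n,1)\cong L(n,1)$. With that step inserted, your proof is complete.
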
 
A well-known corollary of Haken's Lemma is that the Heegaard genus is additive under the connected sum of 3-manifolds. The following conjecture is stated by Ozbagci \cite{O}.
\begin{Conj}\label{conj}
The open book genus is additive under the connected sum of 3-manifolds.
\end{Conj}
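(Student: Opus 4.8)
The plan is to prove the two inequalities separately, since the bound $\obg(M_1\#M_2)\le\obg(M_1)+\obg(M_2)$ is elementary while the reverse is the real content. For the upper bound I would start from minimal-genus open books $(B_1,\pi_1)$ and $(B_2,\pi_2)$ of $M_1$ and $M_2$, with pages $\Sigma_1$ and $\Sigma_2$, and form their connected sum as open books: take the boundary connected sum $\Sigma_1\natural\Sigma_2$ of the pages and extend each monodromy by the identity across the other factor, so that the two (now disjointly supported) monodromies compose to a monodromy of the new page. This open book supports $M_1\#M_2$, and since $\chi(\Sigma_1\natural\Sigma_2)=\chi(\Sigma_1)+\chi(\Sigma_2)-1$, its induced Heegaard genus $1-\chi$ is exactly $\obg(M_1)+\obg(M_2)$. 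Hence $\obg(M_1\#M_2)\le\obg(M_1)+\obg(M_2)$.

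For the lower bound I would take a minimal-genus open book $(B,\pi)$ of $N=M_1\#M_2$ with page $\Sigma$, together with a reducing $2$-sphere $S\subset N$ that separates $N$ into its two summands, and try to isotope $S$ into a position compatible with the fibration: meeting the binding $B$ transversally in exactly two points and meeting each page in a single separating properly embedded arc. Cutting along such an $S$ would sever the open book into open books of $M_1$ and $M_2$ with pages $\Sigma_1,\Sigma_2$ satisfying $\Sigma=\Sigma_1\natural\Sigma_2$ (since gluing two surfaces along a boundary arc is precisely the boundary connected sum). Additivity of $1-\chi$ under this operation would then give $\obg(M_1)+\obg(M_2)\le g(\Sigma)=\obg(N)$, completing the argument.

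The hard part will be producing such a compatible reducing sphere. For ordinary Heegaard splittings, Haken's Lemma isotopes a reducing sphere to meet the Heegaard surface in a single circle and thereby splits the splitting into summand splittings; but the resulting pieces need not be induced by open books, so Haken's Lemma alone is insufficient here. Indeed, the naive combination $\g(N)=\g(M_1)+\g(M_2)\le\obg(M_1)+\obg(M_2)$ together with $\g(N)\le\obg(N)$ does not close the gap, precisely because $\obg$ can strictly exceed $\g$. What is really needed is a ``Haken's Lemma for open books'' controlling the intersection of $S$ with the binding and the pages simultaneously, and no such statement is known in general — this is the obstruction that keeps the conjecture open. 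A tractable route, and presumably the one pursued here, is to bypass the general statement in low genus: when $\obg(N)=2$ the page satisfies $\chi(\Sigma)=-1$, so $\Sigma$ is either a once-punctured torus or a pair of pants, and one can enumerate the possible monodromies and intersection patterns directly to force any non-prime such $N$ to be $L(p,1)\#L(q,1)$, thereby confirming additivity in this case.
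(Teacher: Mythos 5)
You should be clear at the outset that the statement you were asked to prove is an open conjecture: the paper does not prove it, and explicitly says that superadditivity of $\obg$ ``is still unknown.'' So no complete proof was possible, and the genuine gap in your proposal --- the absence of any argument for $\obg(M_1)+\obg(M_2)\le\obg(M_1\#M_2)$ --- is exactly the gap that keeps the statement a conjecture. Your diagnosis of that gap is accurate: Haken's Lemma lets a reducing sphere meet a Heegaard surface in one circle and splits the splitting, but the resulting summand splittings need not be induced by open books; what would be needed is a version that keeps the sphere compatible with the open book (two points on the binding, one arc in each page), and no such statement is known. Your proposed cutting step is precisely the missing idea, and since you cannot produce the compatible sphere, the proposal proves only the easy inequality.

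What you do prove --- subadditivity --- coincides with the paper's own argument: the paper plumbs the two minimal open books, i.e.\ glues the pages along a rectangle, which abstractly is your boundary connected sum $\Sigma_1\natural\Sigma_2$ with the disjointly supported monodromies composed; either way $\chi(\Sigma)=\chi(\Sigma_1)+\chi(\Sigma_2)-1$, giving $\obg(M_1\#M_2)\le\obg(M_1)+\obg(M_2)$. Your closing suggestion for the low-genus case is also exactly what the paper carries out as its main result (Theorem \ref{non-prime}): if $\obg(N)=2$ the page is a once-punctured torus or a pair of pants. Note, however, that the paper's execution in the torus case is not a monodromy enumeration: it identifies $N$ as a double branched cover of $S^3$ along a 3-braid link (Theorem \ref{3-braid}), then uses the Kim--Tollefson primeness criterion (Theorem \ref{prime}) together with the Birman--Menasco braid index theorem (Theorem \ref{additive}) to write $L=L_1\#L_2$ with $b(L_i)=2$, producing genus-one summands. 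Only the pair of pants case proceeds by direct analysis of the monodromy $t_1^{r_1}t_2^{r_2}t_3^{r_3}$, where Seifert fibered space theory (Lemmas \ref{SFS} and \ref{Cor}) forces some $r_i=0$ and the reducing sphere is then exhibited explicitly in the induced Heegaard splitting.
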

For $M=M_1\#M_2$, proving that $\obg(M)\le \obg(M_1)+\obg(M_2)$ is straightforward. For $i=1,2$, take open book decompositions of $M_i$ with pages of homeomorphism type $\Sigma_i$ inducing Heegaard splittings of genus $1-\chi(\Sigma_i)=\obg(M_i)$. A plumbing of these open book decompositions is an open book decomposition for $M=M_1\#M_2$ with pages homeomorphic to $\Sigma=\Sigma_1+\Sigma_2$, which is obtained by gluing closed rectangles $R_i\subset \Sigma_i$ in a certain way (see \cite{E} for details). It follows that $\chi(\Sigma)=\chi(\Sigma_1)+\chi(\Sigma_2)-1$. Since a plumbing of the open books induces a Heegaard splitting of genus $1-\chi(\Sigma)$ for $M$, we get $$\obg(M)\le1-\chi(\Sigma)=1-\chi(\Sigma_1)+1-\chi(\Sigma_2)=\obg(M_1)+\obg(M_2).$$
However, it is still unknown whether the $\obg$ is super-additive, equivalently additive, or not. In this paper, we prove that there cannot be a counter-example to the Conjecture \ref{conj} such that the connected sum has open book genus 2. More precisely, we prove the following.
\begin{Thm}\label{non-prime}
A non-prime 3-manifold $M$ has open book genus 2 if and only if each non-trivial connected summand of $M$ has open book genus 1, that is, $M\cong L(p,1)\#L(q,1)$ for some integers $p,q\neq\pm1$.
\end{Thm}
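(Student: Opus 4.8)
The plan is to treat the two directions separately, with the forward implication being routine and the converse carrying the real content. For the ``if'' direction, suppose each nontrivial summand has open book genus $1$, so $M\cong L(p,1)\#L(q,1)$ with $p,q\neq\pm1$. The subadditivity estimate from the introduction, applied to the standard annulus open books of $L(p,1)$ and $L(q,1)$, gives $\obg(M)\le\obg(L(p,1))+\obg(L(q,1))=2$. For the reverse inequality I would invoke Proposition 1 directly: if $\obg(M)\le1$ then $M$ is either $S^3$ or some $L(r,1)$, both of which are prime, contradicting that $M$ is non-prime. Hence $\obg(M)\ge2$, and equality follows.

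For the ``only if'' direction, fix an open book decomposition of the non-prime manifold $M$ realizing $\obg(M)=2$, so its page $\Sigma$ satisfies $1-\chi(\Sigma)=2$, i.e.\ $\chi(\Sigma)=-1$. A compact orientable surface with $\chi=-1$ is either a pair of pants $P$ (three boundary components) or a once-punctured torus $T$ (one boundary component), so these are the only two page types to consider. The open book induces a genus-$2$ Heegaard splitting $(\hat\Sigma;H_1,H_2)$ in which $\hat\Sigma$ is the double of $\Sigma$ along the binding $B$ and each handlebody is a product, $H_i\cong\Sigma\times I$. Since $M$ is reducible, Haken's Lemma supplies a reducing sphere $S$ meeting $\hat\Sigma$ in a single essential circle $\gamma=\partial D_1=\partial D_2$, where $D_i\subset H_i$ is an essential disk; thus the induced splitting is reducible and decomposes $M$ as a connected sum along $S$.

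The central step, which I expect to be the main obstacle, is to make $S$ compatible with the fibration. I would argue that every essential disk of the product handlebody $H_i\cong\Sigma\times I$ with boundary on $\hat\Sigma$ is isotopic to a vertical disk $\alpha_i\times I$ for some essential properly embedded arc $\alpha_i\subset\Sigma$, and then match $D_1$ and $D_2$ along $\gamma$ after an innermost-disk and outermost-arc cleanup of the intersections of $S$ with the pages and with $B$. The upshot should be that $S$ meets each page in a single essential arc and exhibits the open book as a Murasugi sum (a plumbing along a rectangle): de-plumbing along $S$ writes $M$ as a connected sum of two open books whose pages $\Sigma_1,\Sigma_2$ are the pieces of $\Sigma$ cut along the arc, so that $\chi(\Sigma_1)+\chi(\Sigma_2)=\chi(\Sigma)+1=0$. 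Justifying that the abstract reducing disks furnished by Haken's Lemma can be isotoped into this vertical, rectangle-aligned position, without increasing the page genus, is the technical heart of the argument; here I expect to need a careful analysis of the disk complex of the product handlebody together with the compatibility of the monodromy with the arc decomposition.

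Finally I would read off the conclusion from the page type. If $\Sigma=T$, then every essential arc is non-separating and cutting yields a single annulus, so the de-plumbing presents $M$ by an open book with annulus page and forces $\obg(M)\le1$, contrary to $\obg(M)=2$; hence the page must be the pair of pants $P$. For $\Sigma=P$, a separating reducing sphere corresponds to a separating arc with both endpoints on one binding component, cutting $P$ into two annuli $A_1,A_2$; each summand $M_i$ then carries an open book with annulus page, so $\obg(M_i)\le1$, and since $M_i$ is a nontrivial prime factor, $\obg(M_i)=1$, whence $M_i\cong L(p_i,1)$ by Proposition 1. A non-separating reducing sphere corresponds instead to a non-separating arc and an $S^1\times S^2=L(0,1)$ summand, which is still of the required form since $0\neq\pm1$. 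In all admissible cases $M\cong L(p,1)\#L(q,1)$ with $p,q\neq\pm1$, as claimed. I emphasize that it is precisely the annulus-page (open book genus $1$) control produced in the previous step, rather than mere Heegaard genus $1$, that rules out the lens spaces $L(p,q)$ with $q\not\equiv\pm1$, which otherwise have the correct Heegaard genus but open book genus at least $2$.
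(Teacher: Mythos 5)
Your ``if'' direction is fine (the paper gets the lower bound from additivity of Heegaard genus rather than from Proposition 1, but both work). The ``only if'' direction, however, has a genuine gap at exactly the point you flag as the technical heart, and your endgame for the once-punctured torus case is wrong. Isotoping the Haken reducing sphere into page-compatible position is not a cleanup step that innermost-disk/outermost-arc arguments are known to accomplish; it is essentially the whole difficulty of the problem. Each disk $D_i$ is indeed isotopic to a vertical disk inside its own handlebody $H_i\cong\Sigma\times I$, but the two product structures are glued by the Heegaard map, which is the identity on one page and on the binding annuli and equals $\phi$ on the other page. So a single curve $\gamma$ bounding vertical disks on \emph{both} sides must have the form $\partial(\alpha\times I)$ with $\phi(\alpha)$ isotopic to $\alpha$ rel endpoints; extracting such a monodromy-invariant arc from the mere existence of a reducing sphere is precisely what one cannot do directly. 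The paper never attempts this. It argues in the opposite direction with case-specific machinery: for once-punctured torus pages it uses the correspondence with double branched covers of closed 3-braids (Theorem 3), the Kim--Tollefson primeness theorem (Theorem 5), and the Birman--Menasco braid index theorem (Theorem 4) to split the branch link and hence the manifold; for pair-of-pants pages it uses Seifert fibered space theory (Lemmas 6 and 7) to force one exponent $r_i$ of the monodromy $t_1^{r_1}t_2^{r_2}t_3^{r_3}$ to vanish, and only \emph{then} constructs the sphere explicitly from the resulting fixed arc.

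Moreover, the position you ask for is impossible in the torus case, and the contradiction you derive there contradicts the theorem itself. In a once-punctured torus every essential arc is non-separating, so a sphere meeting each page in a single essential arc would be non-separating in $M$ and would force an $S^1\times S^2$ summand. But there exist non-prime manifolds of open book genus 2 carrying once-punctured-torus open books with no such summand: the double branched cover of the granny knot (a closed 3-braid, by Birman--Menasco) is $L(3,1)\#L(3,1)$. For such $M$ your sphere position cannot be achieved (the correct compatible position is the four-binding-point ``plumbing square'' position, which your write-up conflates with the single-arc position: de-plumbing a once-punctured torus page yields \emph{two} annulus-page summands, not one annulus open book for $M$). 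In particular, the torus case does not terminate in a contradiction, and your conclusion ``hence the page must be the pair of pants'' would exclude genuine examples; the paper's Case 1 shows instead that this case leads, via braid theory, exactly to $M\cong L(p,1)\#L(q,1)$.
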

An open book decomposition induces a genus 2 Heegaard splitting if and only if it has pages of Euler characteristic $-1$. Therefore, the pages are either once-punctured tori or pairs of pants. We will use different tools to deal with each case. In section 1, we recall the fact that a 3-manifold that has an open book decomposition with once-punctured torus pages is a double branced cover of $S^3$ along a 3-braid link, and we list some known facts about double branched covers and braid closures. In section 2, we analyze 3-manifolds which possess open book decompositions with pair of pants pages. We argue that if such a manifold is not prime, then the open book decomposition should be ``simple". Our tool in section 2 is the theory of Seifert fibered spaces. Finally, we prove Theorem \ref{non-prime} in section 3.

Note that the author of \cite{O} refers to the open book genus as the \emph{contact genus} to draw attention to the strong relation between open book decompositions and contact structures, namely the Giroux correspondence. However, we do not benefit from this relation and use classical tools from 3-manifolds topology.

\vspace{0.2in} \noindent{\bf {Acknowledgements.}} The author is grateful to Tao Li for numerous helpful discussions. He also would like to thank Kyle Hayden and Mike Miller for pointing out useful facts from Knot Theory.

\section{Open Book Decompositions of Double Branched Covers}
In this section, we recollect useful facts from classical Knot Theory. Let us denote the double branched cover of $S^3$ along a link $L$ by $M(L)$. See \cite{Rol} for definition and details. The topology of $M(L)$ is strongly related to properties of $L$. It is known that every link $L$ can be represented as a braid closure \cite{A}. The minimum number of strands required to represent $L$ as a braid closure is called the \emph{braid index} of $L$, denoted by $b(L)$. Assume that $L$ is represented as a braid closure along $k$ strands with braid axis $A$. Since $A$ is the unknot, it is the binding of an open book decomposition $(A,p)$ of $S^3$, where $p$ is the projection map $(S^3\setminus A)=\mathrm{int}(D^2)\times S^1\to S^1$. Let $D_t$ be the open disk $p\inv(t)$ and $f:M(L)\to S^3$ the branched covering map. Define $B=f\inv(A)$ and $\pi=p\circ f:(M(L)\setminus B)\to S^1$. It follows that $\pi$ is a fibration with fibers $\Sigma_t=f\inv(D_t)$ realizing $B=f\inv(A)$ as boundary, i.e., $(B,\pi)$ is an open book decomposition of $M(L)$. On the other hand, the braid representation of $L$ on $k$ strands intersects each page $D_t$ in $k$ points, and $\Sigma_t$ is a double branched cover of $D_t$ along these $k$ points. In other words, the pages of $(B,\pi)$ are homeomorphic to a surface $\Sigma$, which is a double branched cover of $D^2$ along $k$ points. It follows that $\chi(\Sigma)=2-k$, and hence the induced open book decomposition $(B,\pi)$ of $M(L)$ induces a Heegaard splitting of genus $1-\chi(\Sigma)=k-1$. This proves the following.
\begin{Prop}\label{bound}
If $L$ is a link in $S^3$ with $b(L)=k$, then $\obg(M(L))\le k-1$.
\end{Prop}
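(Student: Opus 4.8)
The plan is to construct an explicit open book decomposition of $M(L)$ by pulling back the standard open book of $S^3$ through the branched covering map, and then to read off the genus of the induced Heegaard splitting from the Euler characteristic of its pages.

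First, since $b(L)=k$, I would fix a presentation of $L$ as the closure of a braid on $k$ strands and let $A$ denote the braid axis. As $A$ is an unknot disjoint from $L$, it is the binding of the genus-zero open book $(A,p)$ of $S^3$ whose pages are the meridian disks $D_t=p\inv(t)$ of the solid torus $S^3\setminus A$; the crucial geometric feature is that a closed $k$-braid is transverse to every page and meets each $D_t$ in exactly $k$ interior points.

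Next, with $f:M(L)\to S^3$ the double branched covering map, I would set $B=f\inv(A)$ and $\pi=p\circ f$ on $M(L)\setminus B$, and verify that $(B,\pi)$ is an open book decomposition. The content here is that $\pi$ is a fibration with binding $B$: near $A$ the cover $f$ is unbranched (since $A$ avoids $L$), so $B$ is genuinely a link along which the pages close up, while over the interior of each page the branch locus $f\inv(L)$ sits as the $k$ points $L\cap D_t$, making $\Sigma_t=f\inv(D_t)$ a double cover of $D_t$ branched over those $k$ points. Because $L$ is transverse to the pages, these surfaces $\Sigma_t$ assemble into a locally trivial fibration over $S^1$ whose monodromy is the lift of the braid. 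I expect this to be the step requiring the most care, as one must confirm both the fibration and binding conditions near $B$ and check that the branch structure, which lives in the page interiors rather than on the binding, is compatible with the open book structure.

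Finally, I would compute $\chi(\Sigma)$ for a page $\Sigma=\Sigma_t$. As a double cover of $D^2$ branched over $k$ points, the Riemann--Hurwitz formula gives $\chi(\Sigma)=2\chi(D^2)-k=2-k$. Since any open book decomposition induces a Heegaard splitting of genus $1-\chi(\Sigma)$, the decomposition $(B,\pi)$ induces one of genus $1-(2-k)=k-1$. As $\obg(M(L))$ is the minimum genus over all such splittings, this yields $\obg(M(L))\le k-1$, as desired.
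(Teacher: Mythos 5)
Your proof is correct and follows essentially the same route as the paper: pull back the standard disk open book of $S^3$ along the braid axis through the branched covering map, identify the pages of $(B,\pi)$ as double covers of $D^2$ branched over the $k$ points where the braid meets each disk, and compute $\chi(\Sigma)=2-k$ to get an induced Heegaard splitting of genus $k-1$. The extra care you take near the binding (noting the cover is unbranched along $A$) and the explicit Riemann--Hurwitz computation are fine elaborations of steps the paper states more briefly.
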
 
Since the braid index of a link $L$ suggests an upper bound for the open book genus of $M(L)$, it is worth stating the following result concerning the additivity of the braid index under connected sum.
\begin{Thm}[\cite{BM}, The Braid Index Theorem]\label{additive}
The braid index is minus one additive under the connected sum operation, that is, $b(L_1\#L_2)=b(L_1)+b(L_2)-1$.
\end{Thm}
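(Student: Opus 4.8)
The inequality $b(L_1\#L_2)\le b(L_1)+b(L_2)-1$ is elementary, so I would dispatch it first. Represent each $L_i$ as the closure of a braid $\beta_i$ on $k_i=b(L_i)$ strands. The connected sum can be realized by a standard ``braid connected sum'': embed the braid group on $L_2$'s strands into $B_{k_1+k_2-1}$ by shifting its generators past the first $k_1-1$ generators, so that the last strand of $\beta_1$ and the first strand of $\beta_2$ coincide, and form the product braid on $k_1+k_2-1$ strands. Between the region where $\beta_1$ acts and the region where (the shifted) $\beta_2$ acts, the shared strand can be encircled by a $2$-sphere meeting the closure in two points, exhibiting its closure as $L_1\#L_2$. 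This gives a closed-braid representative on $k_1+k_2-1$ strands, hence the bound.

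The reverse inequality is the substantive content, and here I would follow the braid-foliation strategy of Birman and Menasco. Start with a minimal closed-braid representative of $L=L_1\#L_2$ on $n=b(L)$ strands with braid axis $A$, and let $S$ be the essential $2$-sphere realizing the connected-sum decomposition, meeting $L$ transversely in exactly two points. The disk-page open book $(A,p)$ of $S^3$ with pages $D_t$ induces a singular foliation on $S$. The first task is to put $S$ in general position with respect to this open book so that the foliation has only center and saddle tangencies and the two points of $S\cap L$ lie on singular leaves, and then to apply the standard innermost-circle and change-of-foliation moves to remove closed leaves and superfluous saddles.

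The heart of the argument is to show that after these simplifications $S$ sits in \emph{standard composite position}: it meets each page in a single properly embedded arc that separates the $n$ braid points into two groups of sizes $n_1$ and $n_2$ with $n_1+n_2-1=n$, so that the given closed braid is itself the braid connected sum of closed braids $\hat\beta_1$ and $\hat\beta_2$ on $n_1$ and $n_2$ strands whose closures are isotopic to $L_1$ and $L_2$. Granting this, minimality forces $n_i\ge b(L_i)$, whence $n=n_1+n_2-1\ge b(L_1)+b(L_2)-1$, which together with the first paragraph yields equality.

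The main obstacle is exactly this last step: controlling the braid foliation of the incompressible sphere $S$ and proving it can be isotoped into standard composite position. This is the technical core of the Birman--Menasco program and requires their full tiling/foliation machinery for surfaces in braid complements. I do not expect any soft argument to suffice here, since the Morton--Franks--Williams inequality, while it does show that the $v$-breadth of the HOMFLY polynomial is additive under connected sum (because the polynomial is multiplicative) and therefore yields a matching \emph{lower bound} when it happens to be sharp, is not sharp in general and so cannot by itself determine the braid index exactly.
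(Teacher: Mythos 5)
The paper offers no proof of this theorem: it is imported verbatim from Birman--Menasco \cite{BM}, so the only thing to compare your attempt against is that cited work. Your sketch is an accurate account of it. The upper bound $b(L_1\# L_2)\le b(L_1)+b(L_2)-1$ via the braid connected sum (shifting the generators of the second braid so that one strand is shared, and splitting the closure by a sphere meeting it in two points) is complete and correct, and you rightly identify the lower bound --- isotoping the summing sphere into standard composite position relative to the disk open book on the braid axis --- as the technical core; that step is precisely what the Birman--Menasco braid-foliation machinery establishes, and deferring to it is exactly what the paper itself does by citing \cite{BM} rather than reproving it. Your closing remark is also correct and worth keeping: since the HOMFLY polynomial is multiplicative under connected sum, the Morton--Franks--Williams lower bound behaves additively (minus one), but MFW is not sharp in general, so it cannot replace the foliation argument to pin down the braid index exactly.
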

In the discussion above, since $\Sigma$ is a double branched cover of $D^2$ along $k$ points, it is a compact genus $g$ surface with $b$ boundary components, where
$$(g,b)=\begin{cases}
g=(k-1)/2\text{ and }b=1,\text{ if $k$ is odd,}\\
g=(k-2)/2\text{ and }b=2,\text{ if $k$ is even.}
\end{cases}$$
In particular, if $L$ is a 3-braid (respectively 2-braid) link, then $M(L)$ has an open book decomposition with once-punctured torus (respectively annulus) pages. On the other hand, any 3-manifold $M$ that has an open book with once-punctured torus pages is homeomorphic to $M(L)$ for some 3-braid link $L$ (see \cite{Bald}, section 2). Thus, we have the following.
\begin{Thm}\label{3-braid}
A 3-manifold $M$ has an open book decomposition with once-punctured torus pages if and only if it is homeomorphic to a double branched cover of $S^3$ along a 3-braid link $L$.
\end{Thm}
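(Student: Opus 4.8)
The theorem is a biconditional, and one direction is already in hand. The plan is to treat the two implications separately, leaning on the discussion preceding the statement for the easy direction and on the Birman--Hilden theory of symmetric mapping classes for the converse.

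\emph{From 3-braids to once-punctured torus open books.} This direction is essentially established by the construction above. If $L$ is the closure of a braid on three strands with axis $A$, then $A$ is the binding of the disk open book $(A,p)$ of $S^3$, and the branched covering map $f:M(L)\to S^3$ pulls this back to an open book $(B,\pi)=(f\inv(A),\,p\circ f)$ of $M(L)$. Each page $\Sigma_t=f\inv(D_t)$ is the double branched cover of the disk $D_t$ along the three points $L\cap D_t$, so by the Euler characteristic computation recorded above (with $k=3$) it is a compact genus-one surface with one boundary component, i.e. a once-punctured torus. Thus $M(L)$ carries an open book with once-punctured torus pages.

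\emph{From once-punctured torus open books to 3-braids.} Suppose $M$ has an open book with once-punctured torus page $\Sigma$ and monodromy $\phi\in\mathrm{MCG}(\Sigma,\partial\Sigma)$, so that $M$ is obtained from the mapping torus of $\phi$ by gluing a solid torus along the binding. I would first fix a model in which $\Sigma$ is realized as the double branched cover $q:\Sigma\to D^2$ over three interior marked points, with deck involution the hyperelliptic involution $\iota$; here $\iota$ has exactly those three points as fixed points and acts freely on $\partial\Sigma$. The crucial input is the Birman--Hilden theorem for the once-holed torus: every mapping class of $(\Sigma,\partial\Sigma)$ is represented by a homeomorphism commuting with $\iota$, and this identification realizes $\mathrm{MCG}(\Sigma,\partial\Sigma)$ as the three-strand braid group $B_3=\mathrm{MCG}(D^2,\{3\text{ points}\})$. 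Choosing such a symmetric representative $h$ of $\phi$, it descends to a homeomorphism $\bar h$ of $D^2$ permuting the three marked points, i.e. to a braid $\beta\in B_3$; let $L=\hat\beta$ be its closure, a 3-braid link with axis $A$.

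It then remains to identify $M$ with $M(L)$. Since $h$ commutes with $\iota$, the page involutions assemble into an involution $\tilde\iota$ of $M$ whose quotient is precisely the disk open book of $S^3$ determined by $\bar h$, namely $S^3$ itself, and whose fixed-point set is the union over $t\in S^1$ of the three fixed points on each page. As $t$ traverses the circle these three points trace out the closed braid $L$, while $\tilde\iota$ acts freely near the binding, mapping $B$ to the axis $A$ by an unbranched double cover. Hence $\tilde\iota$ exhibits $M$ as the double branched cover of $S^3$ along $L$, i.e. $M\cong M(L)$, completing the converse. The main obstacle is this converse, and within it the Birman--Hilden step: one must know not merely that $\Sigma$ double-covers $D^2$, but that an arbitrary monodromy can be \emph{isotoped} to commute with $\iota$ and thereby descend to a genuine braid, and that the mapping-torus construction is compatible with passing to the branched quotient. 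The remaining bookkeeping --- verifying that the fixed-point set is exactly the closed braid, that the binding covers the axis without branching, and that the quotient open book is the standard disk open book of $S^3$ --- is routine once the symmetric representative is in place.
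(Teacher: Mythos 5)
Your proposal is correct and takes essentially the same route as the paper: the forward direction is the same pullback-of-the-disk-open-book construction spelled out at the start of Section 1, and for the converse the paper simply cites Section 2 of \cite{Bald}, whose content is exactly the Birman--Hilden argument you sketch (symmetric representative of the monodromy with respect to the hyperelliptic involution, descent to a 3-braid, and identification of the quotient with the disk open book of $S^3$ branched along the braid closure). In effect you have supplied the details behind the paper's citation rather than a genuinely different proof.
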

\begin{Rem} This statement does not generalize for $k$-braid links when $k\ge 4$. In particular, there are open book decompositions with connected bindings which are not obtained as double branched covers along braids. This can be argued more carefully, but we simply note that every 3-manifold has an open book with connected binding, whereas there are 3-manifolds, e.g. $S^1\times S^1\times S^1$, which are not double branched covers of $S^3$ \cite{Rol}.
\end{Rem}
We can say more about the topology of $M(L)$ looking at the link $L$. For example, it is known that $M(L_1\#L_2)\cong M(L_1)\#M(L_2)$ because a sphere $S$ that realizes the connected sum $L_1\#L_2$ in $S^3$ lifts to a sphere in $M(L_1\#L_2)$ that realizes the connected sum $M(L_1)\#M(L_2)$. It is also known that $M(L)\cong S^3$ if and only if $L$ is the unknot \cite{MB}. These two facts imply that $M(L)$ is a non-prime manifold whenever $L$ is a composite link since if $L=L_1\#L_2$ for non-trivial links $L_i$, then $M(L)\cong M(L_1)\#M(L_2)$ for non-trivial 3-manifolds $M(L_i)$. The converse is also true.
\begin{Thm}[\cite{KM}, Corollary 4]\label{prime}
A (non-split) link $L$ is prime if and only if $M(L)$ is prime.
\end{Thm}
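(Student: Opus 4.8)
Because this is quoted as a known result of \cite{KM}, I would ultimately just cite it; but here is the argument I would reconstruct. The double branched cover $M(L)$ carries a canonical orientation-preserving covering involution $\tau$ whose fixed-point set $\mathrm{Fix}(\tau)=\tilde L$ is the lift of the branch locus, and the pair quotient is $(M(L)/\tau,\tilde L/\tau)\cong(S^3,L)$. One implication is already in the text: if $L=L_1\#L_2$ with both factors nontrivial then $M(L)\cong M(L_1)\#M(L_2)$ with neither summand $S^3$, so $M(L)$ is non-prime; contrapositively $M(L)$ prime forces $L$ prime. The plan is to establish the reverse implication, i.e. that if $L$ is non-split and $M(L)$ is non-prime then $L$ is composite.

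Suppose $M(L)$ is non-prime; then it is reducible, so it contains an essential $2$-sphere. The one nonelementary ingredient is the equivariant sphere theorem of Meeks--Simon--Yau, which allows me to replace this sphere by an essential $2$-sphere $S\subset M(L)$ that is $\tau$-equivariant: either $\tau(S)\cap S=\emptyset$ or $\tau(S)=S$. I would first rule out the free case $\tau(S)\cap S=\emptyset$. Then $S$ is disjoint from $\tilde L$ and maps homeomorphically onto a sphere $\bar S\subset S^3$ disjoint from $L$. If $L$ lies entirely in one of the two balls bounded by $\bar S$, the branched cover is trivial over the complementary ball and $S$ bounds a ball, contradicting that $S$ is essential; otherwise $\bar S$ splits $L$, contradicting non-splitness. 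This is precisely the step that uses the hypothesis that $L$ is non-split.

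There remains the invariant case $\tau(S)=S$. After an equivariant isotopy I may take $S$ transverse to $\tilde L$, so $\tau|_S$ is an involution of $S^2$ with finite fixed-point set $S\cap\tilde L$. If this set were empty, $\tau|_S$ would be free and its quotient an $\mathbb{RP}^2$ embedded in $S^3$, which is impossible; hence $S$ meets $\tilde L$, and near each intersection point $\tau$ is the local rotation about $\tilde L$, acting on the transverse direction as $-\id$. Thus $\tau|_S$ is an orientation-preserving involution of $S^2$, i.e. a rotation with exactly two fixed points, and the quotient $\bar S\subset S^3$ is a sphere meeting $L$ transversely in two points. Such a sphere decomposes $(S^3,L)$ into one-string tangles $(B_1,\alpha_1)$ and $(B_2,\alpha_2)$, exhibiting $L$ as a connected sum. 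Finally $f\inv(B_i)$ is the double cover of a ball branched over $\alpha_i$, hence a punctured copy of $M(\widehat{\alpha_i})$, which is a ball exactly when $\widehat{\alpha_i}$ is unknotted; since $S=\partial f\inv(B_i)$ bounds no ball on either side, neither tangle is trivial, so both summands of $L$ are nontrivial and $L$ is composite.

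The main obstacle is entirely concentrated in the first move of the second paragraph: passing from an arbitrary essential sphere to a $\tau$-equivariant one via the equivariant sphere theorem. Once that is in hand, the remainder is a routine case analysis built on the classification of involutions of $S^2$, the nonexistence of an embedded $\mathbb{RP}^2$ in $S^3$, and the fact (already cited in the text) that $M(K)\cong S^3$ if and only if $K$ is unknotted.
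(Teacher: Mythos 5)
The paper offers no proof of this statement: it is imported verbatim as Corollary 4 of Kim--Tollefson \cite{KM}, and the only argument appearing in the text is the easy direction (a composite $L$ yields a non-prime $M(L)$), sketched in the paragraph preceding the theorem; that sketch coincides with the first paragraph of your proposal. Judged on its own terms, your reconstruction of the converse is correct and is the standard modern argument: the equivariant sphere theorem is indeed the crux, and the case analysis after it goes through as you describe. In the free case, non-splitness is used exactly where you use it; in the invariant case, the fixed-point set of $\tau|_S$ is finite and nonempty, so by Smith theory it consists of two points, the quotient sphere meets $L$ transversely in two points, and essentiality of $S$ together with the Smith conjecture fact ($M(K)\cong S^3$ if and only if $K$ is the unknot, the paper's citation \cite{MB}) forces both summands to be nontrivial. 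Two caveats are worth recording. First, the balls $B_i$ may contain closed components of $L$ in addition to the arc $\alpha_i$, so the summands are the capped-off arcs together with those components; the argument is unaffected, because the statement that a double branched cover is $S^3$ only for the unknot holds for links, not just knots, but this should be said. Second, \cite{KM} predates Meeks--Simon--Yau: Kim and Tollefson prove the equivariant splitting they need themselves, in the PL category, and that is the real content of their paper. Your proof is thus a legitimate repackaging of theirs, with the hard equivariant step imported as a black box---which is, as you note, the one ingredient that cannot be made elementary.
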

\section{Open Book Decompositions with Pair of Pants Pages}
We can define open book decompositions in an abstract way. Let $\Sigma$ be a compact surface with boundary and $\phi$ an orientation-preserving self-homeomorphism of $\Sigma$, called a \emph{monodromy}, such that $\phi|_{\partial\Sigma}$ is the identity map. Take the mapping torus $M_\phi$ and perform \emph{vertical} Dehn fillings on $\partial M_\phi$, i.e., glue a solid torus $S^1\times D^2$ to each torus component of $\partial M_\phi=\partial\Sigma\times S^1$ via a homeomorphism identifying a meridian $\{\ast\}\times S^1$ to $\{p\}\times S^1$ for some $p\in\partial\Sigma$. The resulting space forms a 3-manifold $M$ with an embedded open book decomposition such that the binding is the cores of the glued solid tori, and the pages are homeomorphic to $\Sigma$. The pair $(\Sigma,\phi)$ is called an \emph{abstract open book decomposition} of $M$. Every embedded open book decomposition of a 3-manifold $M$ can be viewed as an abstract open book decompostion as well \cite{E}. We use the notation $M=(\Sigma,\phi)$. Note that isotoping or conjugating $\phi$ does not change the homeomorphism type of $M$.

An abstract open book decomposition $(\Sigma,\phi)$ of a 3-manifold $M$ defines an extrinsic Heegaard splitting for $M$. Namely, we take two copies of the handlebody $\Sigma\times I$ and glue them along their boundaries $\partial (\Sigma\times I)=\Sigma_0\cup (\partial\Sigma\times I)\cup \Sigma_1$ via the homeomorphism $f$ defined by $f|_{\Sigma_0}=\id$, $f|_{\partial\Sigma\times I}=\id$ and $f|_{\Sigma_1}=\phi$, where $\Sigma_t$ denotes $\Sigma\times\{t\}$. This can be seen as follows.
When we glue $\Sigma_0$'s via the $\id$ map, we obtain the trivial interval bundle over $\Sigma$. Then, gluing $\Sigma_1$'s via $\phi$, we obtain the mapping torus $M_\phi$. Finally, vertical Dehn fillings on $\partial M_\phi$ can be recovered by identifying $\partial\Sigma\times I$'s via the identity map. Hence, the open book decomposition and the Heegaard splitting form homeomorphic 3-manifolds.

\begin{figure}[h]
	\centering
	\def\svgwidth{0.3\linewidth}
	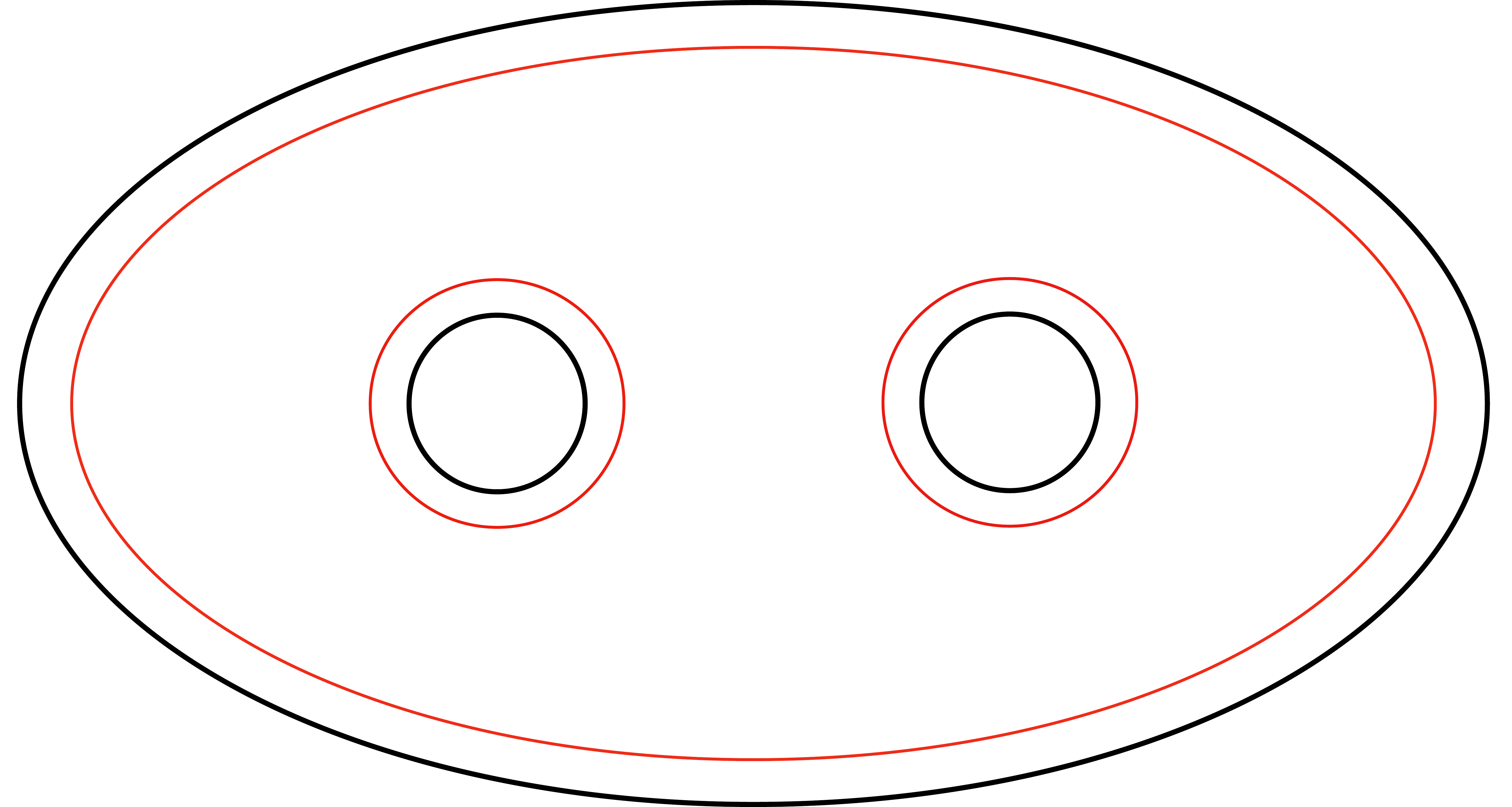
	\caption{Generators of $\mathrm{Mod}(P)$.\label{Pop}}
\end{figure}
Some 3-manifolds with open book genus 2 can be obtained as abstract open book decompositions with pages homeomorphic to a pair of pants $P$. The mapping class group $\mathrm{Mod}(P)$ of $P$ is homomorphic to $\BZ^3$, where the generators are the Dehn twists $t_i$ about the curves $c_i$ in Figure \ref{Pop}. Therefore, any given monodoromy $\phi$ of $P$ can be written as a product of powers of $t_i$ up to isotopy, that is, $\phi$ can be taken to be $t_1^{r_1}t_2^{r_2}t_3^{r_3}$ for some integers $r_i$. In this section, we will argue that possible values of $r_i$ are pretty restricted if the 3-manifold $M=(P,\phi)$ is not prime.
\begin{Lem}\label{SFS}
If no $r_i$ equals 0, then $M=(P,\phi)$ is a Seifert fibered space.
\end{Lem}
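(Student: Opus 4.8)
\emph{Proof sketch.} The plan is to build an explicit Seifert fibration on $M=(P,\phi)$ and to see that it closes up over the twisting regions precisely because no $r_i$ vanishes. Since the Dehn twists $t_i$ are supported in disjoint annular neighborhoods of the boundary-parallel curves $c_i$, I may isotope $\phi$ so that it restricts to $t_i^{r_i}$ on the outer annulus $A_i$ cobounded by $c_i$ and the boundary component $\partial_i P$, and to $\id$ on the central pair of pants $P_0$ cut off by $c_1,c_2,c_3$. This splits the mapping torus as $M_\phi=N_0\cup N_1\cup N_2\cup N_3$, glued along the tori $T_i=c_i\times S^1$, where $N_0=P_0\times S^1$ is a trivial circle bundle and each $N_i$ is the mapping torus of $A_i$ with monodromy $t_i^{r_i}$. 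On $N_0$ I take the product Seifert fibration whose regular fiber is the vertical $S^1$; on $T_i$ this fiber is the class $\beta=\{\mathrm{pt}\}\times S^1$.

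Next I would absorb each vertical Dehn filling $V_i$ along $\partial_i P\times S^1$ into its neighboring piece, forming a solid torus $W_i=N_i\cup V_i$ with single boundary torus $T_i$. The key computation is to locate the meridian of $W_i$ as a slope on $T_i$. Although $t_i^{r_i}$ acts trivially on $H_1(A_i)$, it shears the vertical direction, and tracking the filling meridian $\beta\subset\partial_i P\times S^1$ across $N_i$ along an explicit properly embedded annulus $\mathcal{A}\subset N_i$ (one boundary the filling meridian, the other lying on $T_i$) shows that the meridian of $W_i$ has class $m_i=r_i[c_i]+\beta$ on $T_i$. Hence $|m_i\cdot\beta|=|r_i|$, so whenever $r_i\neq 0$ the fiber $\beta$ is not the meridian of $W_i$, and $W_i$ admits a Seifert fibration with regular fiber $\beta$ and a single exceptional fiber of order $|r_i|$ at its core. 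In particular this fibration of $W_i$ agrees with that of $N_0$ along $T_i$.

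Finally I would glue: the product fibration of $N_0$ extends across all three $W_i$ to a global Seifert fibration of $M$, whose base orbifold is obtained from $P_0$ by capping its three boundary circles, namely $S^2$ with at most three cone points of orders $|r_1|,|r_2|,|r_3|$. The hard part is the meridian computation above, because the vertical fiber and the horizontal directions are transverse on each $T_i$, so neither the product fibration nor a purely vertical one extends across the twist region on its own; what rescues the argument is that the monodromy converts the vertical filling slope into $r_i[c_i]+\beta$, which is transverse to the fiber $\beta$ exactly when $r_i\neq 0$. This is consistent with the excluded case: when some $r_i=0$ the corresponding filling kills the fiber, that piece fails to be Seifert fibered, and indeed $(P,\id)$ is the non-prime, non-Seifert manifold $\#_2(S^1\times S^2)$.
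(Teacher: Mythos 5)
Your proof is correct and follows essentially the same route as the paper's: you decompose $P$ into a central pair of pants carrying a product fibration plus twisted annular pieces, and your key step---transporting the vertical filling slope across the twist region via a properly embedded annulus to see that it becomes the slope $r_i[c_i]+\beta$ on the inner torus---is exactly the paper's Claim about the annulus $B_i$ with boundary an $(r_i,1)$-curve. Packaging the conclusion in terms of solid tori $W_i$ whose meridians meet the fiber $|r_i|\neq 0$ times is just a cosmetic variant of the paper's appeal to Dehn fillings along non-infinity slopes yielding $M(+0,0;1/r_1,1/r_2,1/r_3)$.
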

Apparently, this lemma is known to experts (see \cite{O}), however, the author could not find a proof in the literature. We present a proof here.
\begin{proof}
Let $d_1,d_2,d_3$ be the boundary components of $P$ parallel to the curves $c_1,c_2,c_3$ in Figure \ref{Pop}. For $i=1,2,3$, let $a_i$ be an embedded curve in $P$ so that $a_i$ and $d_i$ bound an annulus $A_i$ with core $c_i$. Finally, let $b_i$ be a spanning arc for $A_i$, and $p_i$ the endpoint of $b_i$ on $d_i$ as in Figure \ref{Pieces}.

\begin{figure}[h]
	\centering
	\def\svgwidth{0.5\linewidth}
	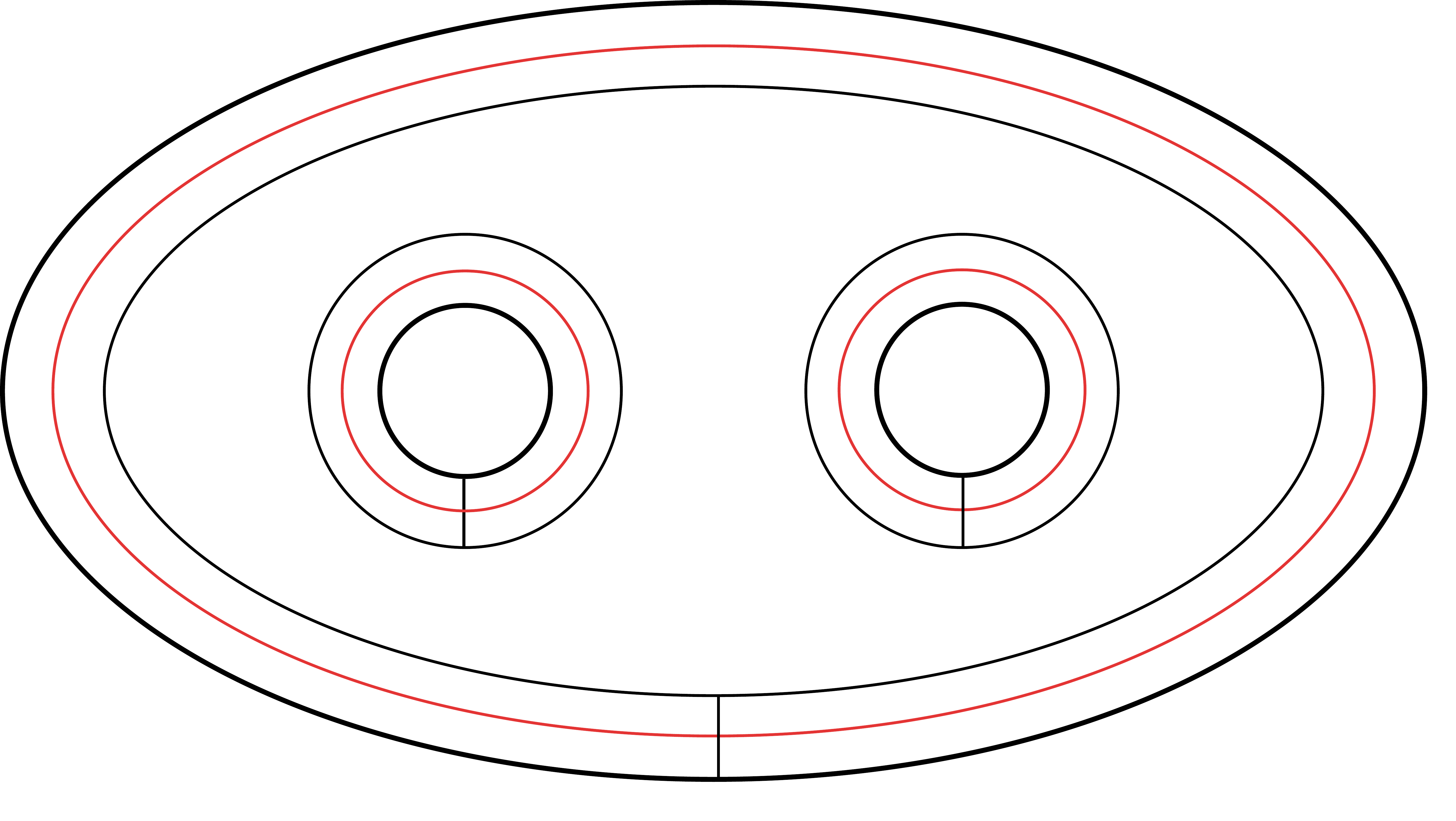
	\caption{The curves $a_i, b_i,c_i,d_i$, and the points $p_i$ on $P$\label{Pieces}.}
\end{figure}

When we remove the annuli $A_i$ from $P$, we obtain another pair of pants $P'\subset P$. Notice that $\phi|_{P'}$ is the identity map, and so $P'\times S^1$ is embedded in the mapping torus $M_\phi$. We will analyze how the vertical Dehn fillings on the boundary components $d_i\times S^1$ of $M_\phi$ are realized on the boundary components $a_i\times S^1$ of $P'\times S^1$. Let $M_i$ be the component of $M_\phi\setminus (P'\times S^1)$ with boundary $(d_i\times S^1)\cup (a_i\times S^1)$. In other words, $M_i$ is the mapping torus of the monodromy $t_i^{r_i}$ on the annulus $A_i$, where $t_i$ is the Dehn twist about $c_i$.\vspace{2mm}

\noindent\textit{Claim.} There is a properly embedded annulus  $B_i$ in $M_i$ such that $B_i\cap (d_i\times S^1)=\{p_i\}\times S^1$ and $B_i\cap (a_i\times S^1)$ is an $(r_i,1)$-curve on the torus $a_i\times S^1$.\vspace{2mm}

\noindent\textit{Proof of the claim.} Take the rectangle $R_i=b_i\times I$ in $A_i\times I$, and let $q_i$ be the vertex $(a_i\cap b_i) \times \{1\}$ of $R_i$. Rotate $q_i$ on $a_i\times\{1\}$ for $r_i$ times in the positive (negative) direction if $r_i$ is positive (negative), while keeping the other vertices of $R_i$ fixed. In Figure \ref{Piecesa}, we show $R_i$ before rotation, and in Figure \ref{Piecesb}, we depict $\partial R_i$ after rotation in the $r_i=2$ case. In particular, $R_i\cap (A_i\times \{1\})$ is the image of $b_i$ under the monodromy $t_i^{r_i}$, and $R_i\cap (A_i\times \{0\})$ is the $b_i$ itself. Since $M_i$ is obtained from $A_i\times I$ by identifying $R_i\cap(A_i\times\{0\})=b_i\times \{0\}$ with $R_i\cap (A_i\times \{1\})=t_i^{r_i}(b_i)\times\{1\}$, then $R_i$ turns into a properly embedded annulus $B_i$ in $M_i$. By construction, the boundary of $B_i$ on $d_i\times S^1$ is $\{p_i\}\times S^1$, and the boundary of $B_i$ on $a_i\times S^1$ is an $(r_i,1)$-curve. This completes the proof of the claim.\vspace{2mm}

\begin{figure}[h!]
	\centering
		\begin{subfigure}[b]{0.5\textwidth}
		\centering
		\def\svgwidth{0.61\linewidth}
		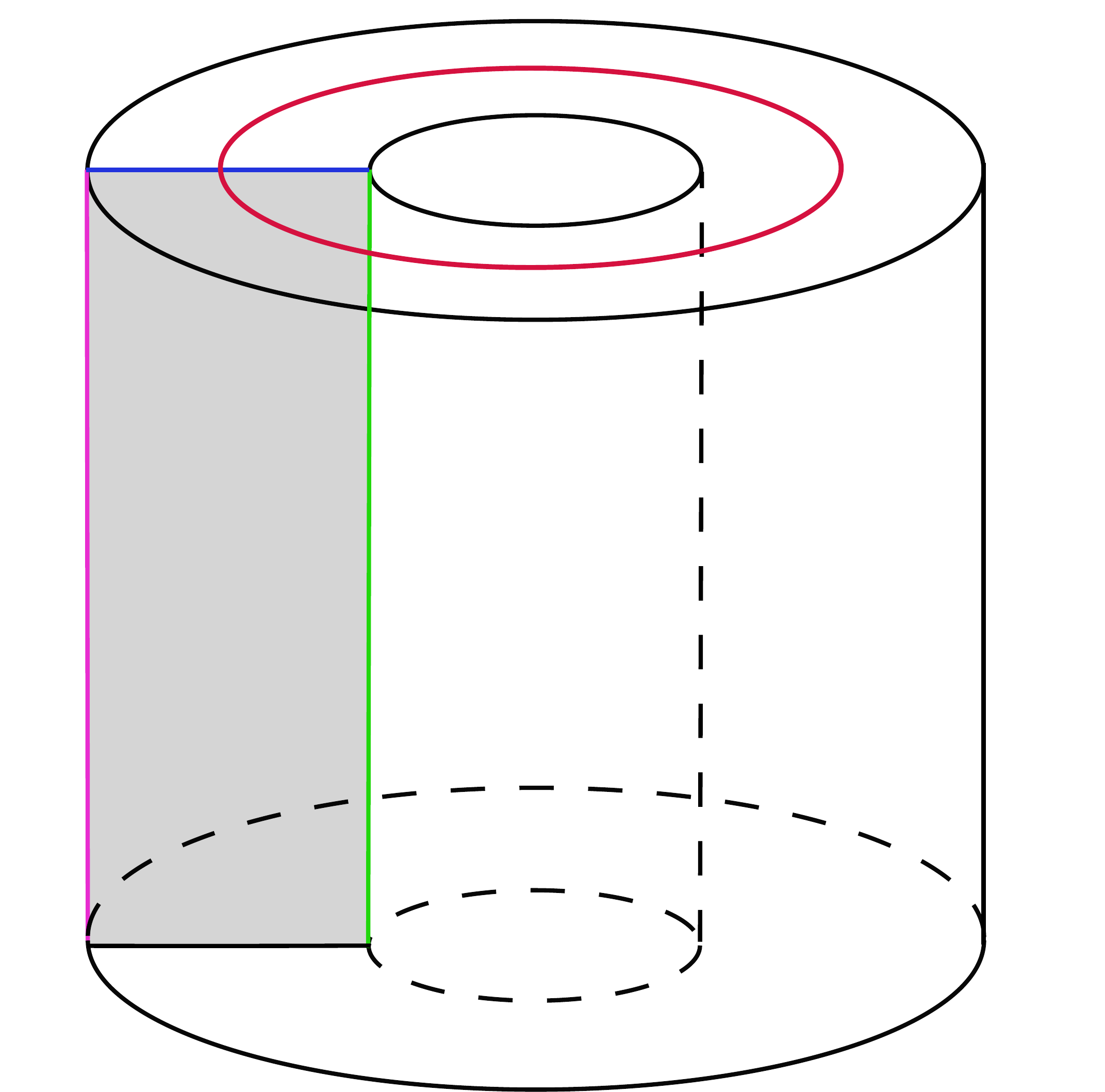
		\caption{$R_i$ before rotation.\label{Piecesa}}
	\end{subfigure}%
	~ 
	\begin{subfigure}[b]{0.5\textwidth}
		\centering
		\def\svgwidth{0.5\linewidth}
		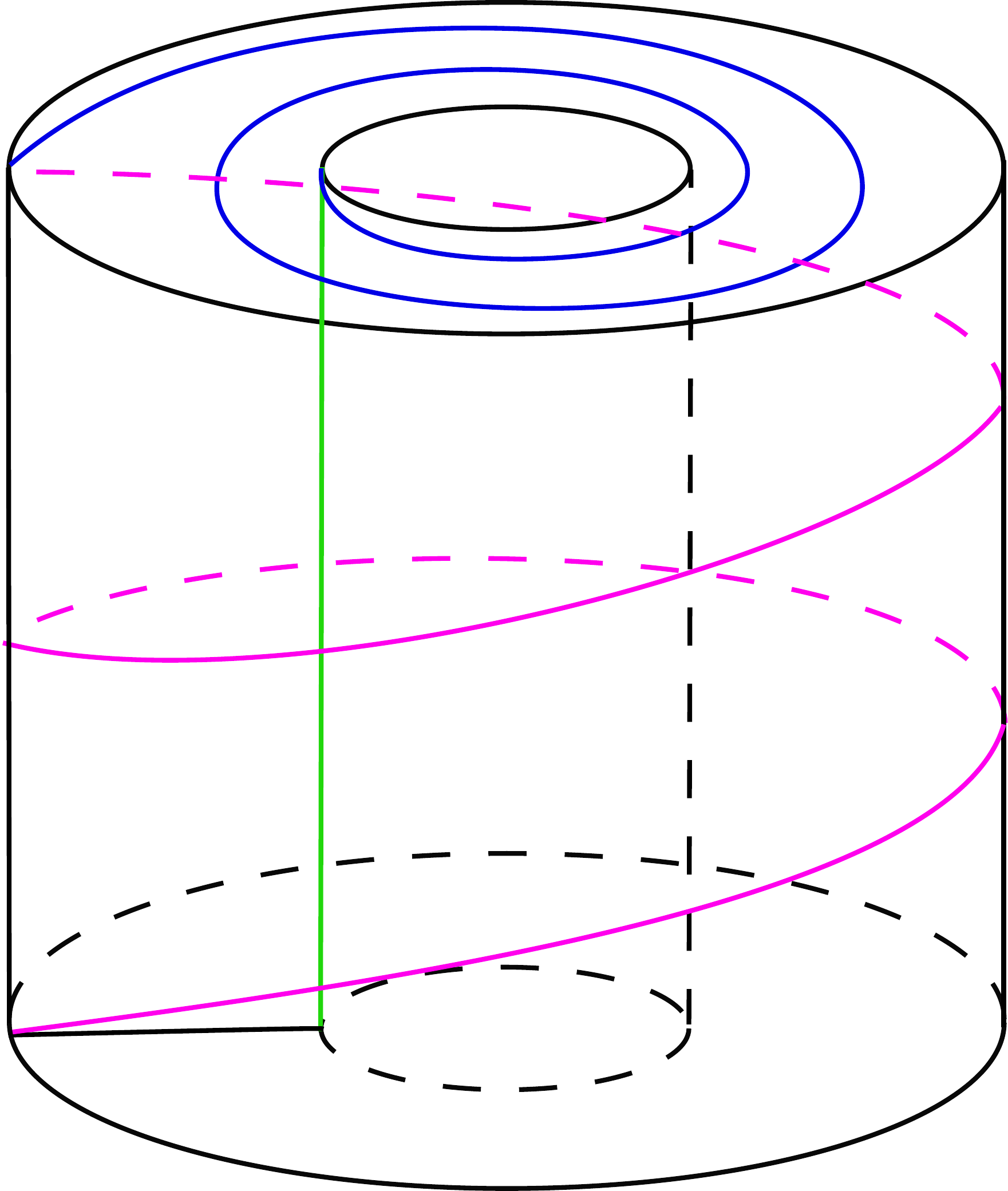
		\caption{$\partial R_i$ after rotation when $r_i=2$.\label{Piecesb}}
	\end{subfigure}
	\caption{Detecting the annulus $B_i$.\label{Annuli}}
\end{figure}

Now, when we perform vertical Dehn fillings on $\partial M_\phi$, we glue a solid torus $S^1\times D^2$ to each boundary component $d_i\times S^1$ by attaching a meridional disk $D=\{\ast\}\times D^2$ of the solid torus to $\{p_i\}\times S^1$. On the boundary component $a_i\times S^1$ of $P'\times S^1$, this is realized as attaching the disk $D\cup_{\{p_i\}\times S^1} B_i$ to the non-infinity slope $(r_i,1)$. Since the result of Dehn fillings along non-infinity slopes $(r_i,1)$ on the components $a_i\times S^1$ of $\partial (P'\times S^1)$ is the Seifert fibered space $M(+0,0;1/r_1,1/r_2,1/r_3)$, the result follows.
\end{proof}
The proof suggests that when no $r_i$ is zero, $M=(P,\phi)$ has a Seifert fibration over the base space $S^2$ with three cone points of multiplicities $|r_i|$, possibly 1. We can prove the following statement using the theory of Seifert fibered spaces.
\begin{Lem}\label{Cor} If $M=(P,\phi)$ is not prime for  $\phi=t_1^{r_1}t_2^{r_2}t_3^{r_3}$, one of the $r_i$'s should be 0.
\end{Lem}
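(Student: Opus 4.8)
The plan is to prove the contrapositive: assuming that no $r_i$ vanishes, I will show that $M=(P,\phi)$ is prime. By Lemma \ref{SFS}, in this situation $M$ is the Seifert fibered space $M(+0,0;1/r_1,1/r_2,1/r_3)$, fibered over the orientable base orbifold $S^2$ with exceptional fibers of multiplicities $|r_i|$ (those $r_i$ with $|r_i|=1$ contributing regular fibers). The argument then reduces to the classical structure theory of Seifert fibered spaces.

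First I would invoke the classification of reducible Seifert fibered spaces: the only closed orientable Seifert fibered spaces that fail to be irreducible are $S^2\times S^1$ and $\mathbb{RP}^3\#\mathbb{RP}^3$. Since $S^2\times S^1$ is prime, the unique \emph{non-prime} closed orientable Seifert fibered space is $\mathbb{RP}^3\#\mathbb{RP}^3$. Hence it suffices to show that our $M$ is not homeomorphic to $\mathbb{RP}^3\#\mathbb{RP}^3$; once this is done, $M$ is either irreducible or $S^2\times S^1$, and in either case prime, which proves the contrapositive.

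The decisive point is that $M$ is fibered over the \emph{orientable} base $S^2$, so its regular fiber $h$ represents a central element of $\pi_1(M)$. In contrast, $\pi_1(\mathbb{RP}^3\#\mathbb{RP}^3)\cong\BZ/2\ast\BZ/2$ (the infinite dihedral group) has trivial center; equivalently, the Seifert fibration of $\mathbb{RP}^3\#\mathbb{RP}^3$ has \emph{non-orientable} base $\mathbb{RP}^2$, along which the fiber is inverted rather than fixed. I would make this rigorous by a short trichotomy on $h$: if $h$ has infinite order, then $\pi_1(M)$ has nontrivial center and so cannot be $\BZ/2\ast\BZ/2$; if $h$ has finite order and $\pi_1(M)$ is finite, then $M$ is a spherical space form, hence prime with finite group, certainly not $\mathbb{RP}^3\#\mathbb{RP}^3$; and if $h$ has finite order while $\pi_1(M)$ is infinite, then $\pi_1(M)$ maps onto the triangle group $\pi_1^{\mathrm{orb}}(S^2(|r_1|,|r_2|,|r_3|))$ with finite central kernel, and such a triangle group is never the infinite dihedral group. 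In every case $M\not\cong\mathbb{RP}^3\#\mathbb{RP}^3$, so $M$ is prime.

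I expect the main obstacle to be exactly this exclusion of $\mathbb{RP}^3\#\mathbb{RP}^3$, and in particular resisting the temptation to separate it from $M$ by an abelian invariant. That approach fails: taking $r_1=r_2=2$ and $r_3=-2$ (all nonzero) gives $M(+0,0;1/2,1/2,-1/2)$, which is the quaternionic spherical space form $S^3/Q_8$; this manifold is prime, yet $H_1(S^3/Q_8)\cong\BZ/2\oplus\BZ/2$, matching $H_1(\mathbb{RP}^3\#\mathbb{RP}^3)$ exactly. Thus the two manifolds cannot be told apart homologically, and the distinction must be drawn from the Seifert structure itself --- the orientability of the base, equivalently the centrality of the fiber $h$ --- rather than from any abelianized data.
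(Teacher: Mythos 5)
Your proof is correct, but it takes a genuinely different route from the paper's. The paper stays inside the essential-surface theory set up by Lemma \ref{SFS}: if $M$ is reducible, an essential sphere can be isotoped to be horizontal (a sphere cannot be vertical), so it branched-covers the base $S^2$, and the Riemann--Hurwitz count $\chi(S^2)-\chi(S)/n=\sum_i(1-1/|r_i|)$ is used to conclude that $M$ is an $S^2$-bundle over $S^1$, hence $S^2\times S^1$, hence prime. You instead quote the classification of reducible orientable Seifert fibered spaces (only $S^2\times S^1$ and $\mathbb{RP}^3\#\mathbb{RP}^3$) and rule out $\mathbb{RP}^3\#\mathbb{RP}^3$ group-theoretically: over the orientable base the regular fiber is central, while $\BZ/2\ast\BZ/2$ has trivial center and is not isomorphic to any von Dyck group $D(a,b,c)$ with $a,b,c$ finite. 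Both routes work, but yours is in one respect more robust: the paper's step ``as $\chi(S)=\chi(S^2)=2$ we get $|r_i|=1$ for each $i$ and $n=1$'' is too quick, since the equation also admits solutions such as $(|r_1|,|r_2|,|r_3|)=(1,2,2)$ with $n=2$ --- which actually occurs in this family, e.g.\ $(r_1,r_2,r_3)=(1,-2,-2)$ gives $S^2\times S^1$ carrying a degree-$2$ horizontal sphere --- and $(2,2,2)$ with $n=4$, which is excluded only by the further constraint that a horizontal surface forces Euler number $e=\sum_i 1/r_i=0$. Your classification-plus-center argument sidesteps this arithmetic entirely, at the cost of invoking heavier standard machinery; the paper's argument is more elementary and self-contained given Hatcher's vertical/horizontal dichotomy, but as written needs the $e=0$ condition to complete its case analysis. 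Your closing observation that $S^3/Q_8=M(+0,0;1/2,1/2,-1/2)$ shares $H_1\cong\BZ/2\oplus\BZ/2$ with $\mathbb{RP}^3\#\mathbb{RP}^3$ correctly explains why no abelianized invariant could substitute for the centrality/orientability argument.
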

\begin{proof}
Assume that no $r_i$ equals 0, so $M$ is Seifert fibred as above. Let $\pi:M\to S^2$ be the projection map of this fibration. We will prove that $M$ is either irreducible or homeomorphic to $S^2\times S^1$, hence it is prime. If $M$ is reducible, then there exists an essential sphere $S$ in $M$ which intersects each Seifert fiber transversely, and hence $\pi|_S:S\to S^2$ is a branched cover along three cone points with multiplicities $|r_i|$ \cite{H}. Hence, we obtain $\chi(S^2)-\chi(S)/n=\sum_{i=1}^{3}(1-1/|r_i|)$ for $n$ the degree of the cover. As $\chi(S)=\chi(S^2)=2$, we get $|r_i|=1$ for each $i$, and $n=1$. This implies that $S$ intersects each fiber once, and $\overline{M\setminus N(S)}$ is homeomorphic to $S\times I$. In other words, $M$ is an orientable $S^2$ bundle over $S^1$, and thus $M$ is homeomorphic to $S^2\times S^1$.
\end{proof}

\section{Proof of  Theorem \ref{non-prime}}
The if direction of Theorem \ref{non-prime} is straightforward. If $M_1,M_2$ are 3-manifolds with open book genus 1, then they have Heegaard genus 1. Hence, $\g(M)=2$ for $M=M_1\#M_2$. It follows that $2=\g(M)\le\obg(M)\le2$, by the subadditivity of the open book genus. Therefore, we obtain $\obg(M)=2$.

For the only if direction of Theorem \ref{non-prime}, let $M$ be a non-prime 3-manifold with open book genus $2$. Pick an open book decomposition of $M$ inducing a genus 2 Heegaard splitting. The pages of such an open book have Euler characteristic $-1$, hence they are either once-punctured tori or pairs of pants. We analyze each case separately. \vspace{1mm}

\noindent\textbf{Case 1.} $M$ has an open book decomposition with once-punctured torus pages.\vspace{1mm}

By Theorem \ref{3-braid}, $M$ is homeomorphic to a double branched cover of $S^3$ along a 3-braid link $L$. Theorem \ref{prime} implies that $L$ is a composite link, that is, $L=L_1\#L_2$ for some non-trivial links $L_i$, since $M$ is not prime.  On the other hand, by Proposition \ref{bound}, $b(L)$ must be 3 since otherwise $\obg(M(L))$ would be less than 2. Finally, it follows from Theorem \ref{additive} that $b(L)=3=b(L_1)+b(L_2)-1$ for braid indices, so $b(L_1)=b(L_2)=2$. Therefore, we get $M\cong M(L_1\#L_2)\cong M(L_1)\#M(L_2)$,
which implies that the prime pieces $M(L_1),M(L_2)$ of $M$ are double branched covers of $S^3$ along 2-braid links. The coverings suggest open book decompositions with annulus pages inducing genus 1 Heegaard splittings for $M(L_1)$ and $M(L_2)$. Hence, the result follows. \vspace{1mm}

\noindent\textbf{Case 2.} $M$ has an open book decomposition with pair of pants pages.\vspace{1mm}

We can see $M$ as an abstract open book $(P,\phi)$ for $P$ the pair of pants with monodromy $\phi=t_1^{r_1}\,t_2^{r_2}\,t_3^{r_3}$, where $t_i$ are the Dehn twists about the curves $c_i$ in Figure \ref{Pop}. By Lemma \ref{Cor}, one of the $r_i$'s is zero because $M$ is not prime. Assume that $r_3=0$, so $\phi=t_1^{r_1}\,t_2^{r_2}$. We now analyze the Heegaard splitting induced by $(P,\phi)$.

\begin{figure}[h]
	\centering
	\def\svgwidth{0.7\linewidth}
	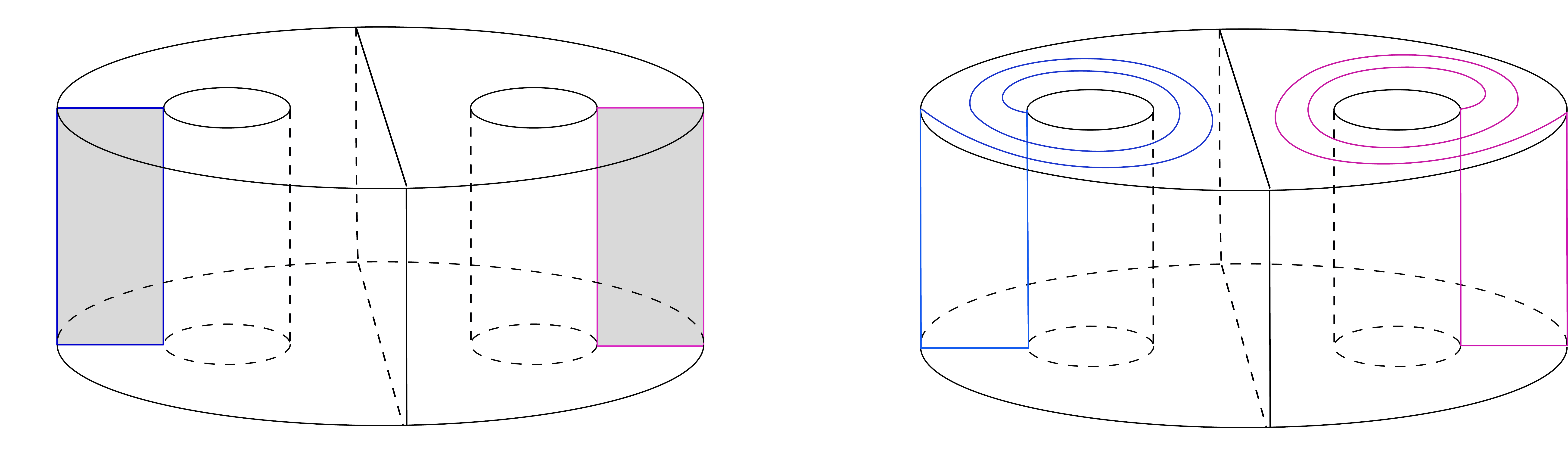
	\caption{The induced Heegaard splitting when $r_1=2$ and $r_2=-2$ is homeomorphic to $L(2,1)\#L(-2,1)$.\label{Splitting}}
\end{figure}

Take two distinct copies $H_1, H_2$ of $P\times I$. Let $\alpha\subset P$ be a properly embedded essential arc with endpoints in $d_3$ such that $\alpha$ intersects neither $c_1$ nor $c_2$, hence $\phi$ fixes $\alpha$ pointwise. Let $\alpha_1,\alpha_2\subset P$ be properly embedded non-separating arcs which do not intersect $\alpha$ and cut $P$ into a disk. Now take the vertical disks $D=\alpha\times I$, $D_1=\alpha_1\times I$ and $D_2=\alpha_2\times I$ in $H_1$ as in Figure \ref{Splitting}. The disks $D_1, D_2$ cut $H_1$ into a 3-ball, and hence	the resulting 3-manifold is uniquely determined by where $\partial D_1,\partial D_2$ are mapped in $\partial H_2$ under the Heegaard map $f: \partial H_1 \to \partial H_2$ defined by $f|_{P_0}=\id$, $f|_{\partial P\times I}=\id$ and $f|_{P_1}=\phi$, where $P_t=P\times\{t\}$. In Figure \ref{Splitting}, we depict $f(\partial D_1),f(\partial D_2)\subset \partial H_2$ in the case $r_1=2$ and $r_2=-2$. Figure \ref{Splitting} is suggestive, and in general, two distinct copies of $D$ glue along their boundaries to form a sphere $S\subset M$ that realizes the connected sum $L(r_1,1)\#L(r_2,1)$. Finally, note that $r_1,r_2\neq \pm 1$. Otherwise, one of $L(r_i,1)$ would be $S^3$, and $M$ would be homeomorphic to either $S^3$, $S^1\times S^2$, or a lens space. Thus, the connected summands of $M$ have open book genus 1. \qed

\end{document}